\newtheorem{theorem}{Theorem}
\newtheorem{lemma}[theorem]{Lemma}
\newtheorem{conjecture}{Conjecture}
\theoremstyle{definition}
\newtheorem{definition}[theorem]{Definition}
\newtheorem{example}[theorem]{Example}
\theoremstyle{remark}
\newcommand\fix{{\rm{Fix}}}
\newcommand\failure{{\rm{Fail}}}
\newcommand\denom{{\rm{Denom}}}
\newcommand\rad{{\rm{Rad}}}
\newcommand\lcm{{\rm{lcm}}}
\newcommand\divides{{\mathchoice{\mathrel{\bigm|}}{\mathrel{\bigm|}}{\mathrel{|}}%
       {\mathrel{|}}}}
\newcommand\smalldivides{\mathrel{\kern-2pt\kern3.5pt|}}
\newcommand\notdivides{\mathrel{\kern-3pt\not\!\kern4.3pt\bigm|}}
\newcommand\smallnotdivides{\mathrel{\kern-2pt\not\!\kern3.5pt|}}
\def\({\left(}
\def\){\right)}
\def\PP{\mathbb P}
\def\C{\mathbb C}
\def\N{\mathbb N}
\def\S{\mathbb S}
\def\eqref#1{{{\rm(\ref{#1})}}}
\def\beginfig{\begin{figure}[!htbp]\begin{center}}
\def\endfig{\end{center}\end{figure}}
\renewcommand\ge{\geqslant}
\renewcommand\le{\leqslant}
\renewcommand\leq{\leqslant}
\renewcommand\subset{\subseteq}
\def\stirlingtwo#1#2{S^{(2)}(#1,#2)}
\def\stirlingone#1#2{S^{(1)}(#1,#2)}
\def\stirlingonesigned#1#2{S^{(1)}_{\pm}(#1,#2)}
\def\sequencetwo{{S}^{(2)}}
\def\sequenceone{{S}^{(1)}}
\begin{document}

\title{Stirling numbers and periodic points}


\author{Piotr Miska}
\address{Faculty of Mathematics and Computer Science, Institute of Mathematics, Jagiellonian University, {\L}ojasiewicza 6, 30--348, Krak{\'o}w, Poland.}
\email{piotr.miska@uj.edu.pl}

\author{Tom Ward}
\address{School of Mathematics, Newcastle University, Newcastle NE1 7RU, U.K.}
\email{tom.ward@newcastle.ac.uk}

\thanks{The results discussed here were
conjectured based on numerical experiments
by the second named author, leading the first named
author to discover an appropriate framework and
proofs. The research of the first author is supported by the grant of the Polish National Science Centre no. UMO-2019/34/E/ST1/00094}

\subjclass[2010]{37P35, 11B73}
\keywords{Stirling numbers, realizability, almost realizability}

\date{\today}

\begin{abstract}
We introduce the notion of `almost realizability',
an arithmetic generalization of `realizability' for
integer sequences, which is the property of counting periodic
points for some map.
We characterize the intersection between
the set of Stirling sequences (of both the first and the second kind)
and the set of almost realizable sequences.
\end{abstract}

\maketitle

\section{Introduction, definitions, and examples}

Denote the sets of non-negative integers and prime numbers by~$\N$ and~$\PP$, respectively.
Write~$\stirlingone{n}{k}$ for the (signless) Stirling
numbers of the first kind, defined for any~$n\ge1$ and~$0\le k\le n$
to be the number of permutations of~$\{1,\dots,n\}$
with exactly~$k$ cycles.
Write~$\stirlingonesigned{n}{k}$ for the (signed)
Stirling numbers of the first kind, defined by
the relation
\[
(x)_n=x(x-1)(x-2)\cdots(x-n+1)=\sum_{k=0}^{n}\stirlingonesigned{n}{k}x^k
\]
for all~$n\ge1$ and~$0\le k\le n$.
The two are related by~$(-1)^{n-k}\stirlingonesigned{n}{k}=\stirlingone{n}{k}$
for all~$n\ge1$ and~$0\le k\le n$.
Finally, let~$\stirlingtwo{n}{k}$ for~$n\ge1$ and~$1\le k\le n$
denote the Stirling numbers of the second kind,
so~$\stirlingtwo{n}{k}$ counts the number of ways to partition
a set comprising~$n$ elements
into~$k$ non-empty subsets.

\begin{definition}
For each~$k\ge1$ define sequences
\begin{align*}
\sequenceone_k
&=
\bigl(\stirlingone{n+k-1}{k}\bigr)_{n\ge1}
\intertext{and}
\sequencetwo_k
&=
\bigl(\stirlingtwo{n+k-1}{k}\bigr)_{n\ge1}.
\end{align*}
\end{definition}

The properties we wish to discuss depend entirely
on the exact offset~$(k-1)$ chosen in the index~$n$, so for definiteness we
mention two examples:
\begin{align}
\sequenceone_3&=(1,6,35,225,\dots)\label{stirlingoneat3sequence}
\intertext{and}
\sequencetwo_3&=(1,6,25,90,\dots).\label{stirlingtwoat3sequence}
\end{align}

A natural combinatorial property that
a sequence of non-negative integers may have
is that it counts periodic points for iterates
of a map. For our purposes a map can simply be
regarded as a permutation of~$\N$ with the
property that there are only finitely many cycles of
each length (though one may equally well ask that
the map be a~$C^{\infty}$ diffeomorphism of the
annulus by a result of Windsor~\cite{MR2422026}).

\begin{definition}\label{definitionRealizable}
An integer sequence~$A=(A_n)$
is called \emph{realizable} if either of the two
equivalent conditions holds:
\begin{enumerate}
\item[(a)] There is a map~$T\colon X\to X$
on a set~$X$
with the property that
\[
A_n=\fix_{(X,T)}(n)=\vert\{x\in X\mid T^nx=x\}\vert
\]
for all~$n\geqslant1$; or
\item[(b)] The sequence~$A$ satisfies both the
\emph{Dold condition}
\begin{equation}\label{equationDoldCondition}
n\divides(\mu*A)(n)=\sum_{d\smalldivides n}\mu({n}/{d})A_d
\end{equation}
for all~$n\ge1$
(from~\cite{MR724012})
and the \emph{sign condition}
\[
(\mu*A)(n)=\sum_{d\smalldivides n}\mu({n}/{d})A_d\geqslant0
\]
for all~$n\ge1$.
\end{enumerate}
\end{definition}

Here~$\mu$ denotes the classical M{\"o}bius function,
and~$*$ denotes Dirichlet convolution.
The equivalence between the two definitions is just
a consequence of the fact that the set of
points of period~$n$ under a map is the disjoint
union of the points living on closed orbits of
length~$d$ for~$d\divides n$, and each closed
orbit of length~$d$ has exactly~$d$ points on it.
We refer to work of Pakapongpun, Puri and Ward~\cite{MR2486259,MR3194906,MR1873399}
for more on this equivalence and some
of its functorial consequences.

\begin{example}\label{exampleNotRelisableAtn=2andk=3}
The sequences~$\sequenceone_3$ and~$\sequencetwo_3$
are not realizable. To see this, notice that the
Dold condition applied at~$n=2$ shows that
if the sequence~$A$ is realizable, then~$A_2-A_1$ must be
even, while~\eqref{stirlingoneat3sequence}
and~\eqref{stirlingtwoat3sequence} do not have
this property.
\end{example}

Every rational number~$x$ can be written in a unique way as a quotient of two coprime integers with positive denominator. Let us denote the denominator in this representation of~$x$ as~$\denom(x)$.
If additionally~$x\neq0$, then there exists a unique
sequence~$(\alpha_p)_{p\in\PP}$ of integers, where~$\alpha_p\neq 0$ only for finitely many primes~$p$, such that
\[
|x|=\prod_{p\in\PP} p^{\alpha_p}.
\]
We then define the~$p$-adic norm of~$x$
to be
\[
|x|_p=p^{-\alpha_p}.
\]

Ignoring the sign condition for the moment,
an integer sequence~$A$ fails to satisfy the
Dold condition if
and only if
\[
\denom\(\tfrac1n(\mu*A)(n)\)
=
\prod_{p\in\PP}\max\left\{1,\left\vert\tfrac{1}{n}(\mu*A)(n)\right\vert_p\right\}>1
\]
for some~$n\ge1$.
This gives a \emph{measure of failure}
for a sequence~$A$ to be realizable,
as follows.

\begin{definition}\label{definitionAlmostRealizable}
For a sequence~$A$ of non-negative integers, we write
\[
\failure(A)
=
\begin{cases}
\lcm
\(\{
\denom\((\tfrac1n\mu*A)(n)\)\mid n\ge1
\}\)&\mbox{if this is finite};\\
\infty&\mbox{if not}.
\end{cases}
\]
The sequence~$A$ is said to be \emph{almost realizable}
if~$\failure(A)<\infty$ and
it satisfies the sign condition.
\end{definition}

A realizable sequence~$A$ has~$\failure(A)=1$,
and if~$A$ is almost realizable, then
\[
\failure(A)\cdot A
=
(\failure(A)\cdot A_n)_{n\ge1}
\]
is realizable. That is, in such a case
the failure to be
realizable can be `repaired'
simply by multiplying by a single number.
The motivation for this (admittedly odd) notion comes
from a result of Moss and Ward, where it arose in an
unexpected setting.

\begin{example}[Moss and Ward~\cite{mw}]
The Fibonacci sequence~$F=(F_n)=(1,1,2,\dots)$ is
not almost realizable, but the Fibonacci sequence
sampled along the squares~$(F_{n^2})$ is almost
realizable with~$\failure ((F_{n^2}))=5$.
\end{example}

\section{Main results}

Our results establish when---and to what extent---the
Stirling numbers satisfy Definitions~\ref{definitionRealizable}
and~\ref{definitionAlmostRealizable}. In order to expose
the interaction between the arithmetic properties of
Stirling numbers and the properties of realizability and almost
realizability we move the proofs of the purely
combinatorial lemmas into Section~\ref{sectionLemmaProofs}.

\begin{theorem}[Stirling numbers of the first kind]\label{theoremfirstkind}
For~$k\ge1$ the sequence~$\sequenceone_k$ is
not almost realizable.
\end{theorem}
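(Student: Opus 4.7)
The plan is to show directly that $\failure(\sequenceone_k)=\infty$, by producing infinitely many primes that enter as factors of some denominator $\denom\bigl(\tfrac1n(\mu*\sequenceone_k)(n)\bigr)$. First I would restrict to $n=p$ prime, where
\[
(\mu*\sequenceone_k)(p)=\stirlingone{p+k-1}{k}-\stirlingone{k}{k}=\stirlingone{p+k-1}{k}-1.
\]
Since the right-hand side is an integer, $p$ divides $\denom\bigl(\tfrac1p(\mu*\sequenceone_k)(p)\bigr)$ exactly when $\stirlingone{p+k-1}{k}\not\equiv 1\pmod{p}$. The target is therefore the congruence
\[
\stirlingone{p+k-1}{k}\equiv-1\pmod{p}\quad\text{for every prime }p\ge k;
\]
for odd such $p$ this gives $\stirlingone{p+k-1}{k}-1\equiv -2\not\equiv 0\pmod{p}$, so $p$ lies in the denominator, and since there are infinitely many odd primes $p\ge k$ the lcm in the definition of $\failure$ is unbounded.

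To verify the congruence I would use the rising factorial identity
\[
x^{(n)}=x(x+1)\cdots(x+n-1)=\sum_{j=0}^{n}\stirlingone{n}{j}\,x^{j}
\]
and factor
\[
x^{(p+k-1)}=\Bigl(\prod_{i=0}^{p-1}(x+i)\Bigr)\Bigl(\prod_{j=0}^{k-2}(x+p+j)\Bigr).
\]
Modulo $p$ the first factor coincides with $\prod_{a\in\F_p}(x-a)=x^{p}-x$, as its roots $0,-1,\dots,-(p-1)$ exhaust $\F_p$; the second factor reduces termwise to $x^{(k-1)}=\sum_{m}\stirlingone{k-1}{m}x^{m}$. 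Extracting the coefficient of $x^{k}$ in $(x^{p}-x)\cdot x^{(k-1)}$, the $x^{p}$-piece would contribute $\stirlingone{k-1}{k-p}$, which vanishes for $p\ge k$ (either $k-p<0$, or $p=k$ and $\stirlingone{k-1}{0}=0$), while the $-x$-piece contributes $-\stirlingone{k-1}{k-1}=-1$.

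I expect the polynomial factorization modulo $p$ to be the clean heart of the argument; the one delicate point is the boundary case $p=k$, which rests on $\stirlingone{k-1}{0}=0$ for $k\ge2$, with $k=1$ reducing directly to Wilson's theorem $(p-1)!\equiv-1\pmod{p}$. Once the mod-$p$ congruence is in place, infinitely many odd primes divide denominators in the definition of $\failure$, which gives $\failure(\sequenceone_k)=\infty$ and thus the failure of almost realizability.
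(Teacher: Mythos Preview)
Your proposal is correct and takes a genuinely different route from the paper.

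The paper proves a finer statement (its Lemma~\ref{lemmaStirlingOneMainCombinatorialArgument}): by enumerating permutations of a $(p+k-1)$-set according to cycle type and showing that all but one term in the resulting partition sum vanish modulo~$p$, it obtains
\[
\stirlingone{p+k-1}{k}\equiv-\Bigl\lceil\tfrac{k}{p}\Bigr\rceil\pmod{p}
\]
for \emph{all}~$k$, and then applies this for primes with~$k\le(p-2)p$. Your argument instead works with the generating polynomial: reducing the rising factorial~$x^{(p+k-1)}$ modulo~$p$ via the identity~$\prod_{a\in\F_p}(x-a)=x^p-x$ immediately gives
\[
\sum_{j}\stirlingone{p+k-1}{j}x^{j}\equiv(x^p-x)\,x^{(k-1)}\pmod{p},
\]
and reading off the coefficient of~$x^{k}$ yields~$\stirlingone{p+k-1}{k}\equiv-1\pmod{p}$ once~$p\ge k$. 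Your restriction to~$p\ge k$ is exactly the regime in which the paper's formula specializes to~$-\lceil k/p\rceil=-1$, so the two conclusions agree where they overlap. What you gain is brevity and a clean algebraic core with no combinatorial bookkeeping; what the paper's approach buys is the exact residue for all~$k$, along with a template that it reuses verbatim for the second-kind numbers in Lemma~\ref{lemmaS2CongruencesAndPeriodicity}.
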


\begin{theorem}[Stirling numbers of the second kind]\label{theoremsecondkind}
For~$k\le2$ the sequence~$\sequencetwo_k$ is realizable.
For~$k\ge3$ the sequence~$\sequencetwo_k$ is not
realizable, but is almost realizable
with~$\failure\bigl(\sequencetwo_k\bigr)\divides(k-1)!$
for all~$k\ge1$.
\end{theorem}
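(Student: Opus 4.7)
The plan is to derive a closed expression for $(\mu*\sequencetwo_k)(n)/n$ via partial fractions and to use it to extract the divisibility bound $\failure\mid(k-1)!$, the sign condition, and the realizability/non-realizability dichotomy.

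I begin with the ordinary generating function $\sum_{n\ge1}\stirlingtwo{n+k-1}{k}z^n=z/\prod_{i=1}^k(1-iz)$, whose partial-fraction decomposition gives $\stirlingtwo{n+k-1}{k}=\sum_{i=1}^k c_i\, i^n$ with $c_i=(-1)^{k-i}i^{k-2}/((i-1)!(k-i)!)$; cleared of denominators this reads $(k-1)!\,\stirlingtwo{n+k-1}{k}=\sum_{i=1}^k(-1)^{k-i}\binom{k-1}{i-1}i^{n+k-2}$. Applying $\mu*$ term-by-term and invoking the classical fact that $n\mid(\mu*i^{\cdot})(n)$ for every $i\ge1$ (realizability of the full $i$-shift), I obtain $(k-1)!\,(\mu*\sequencetwo_k)(n)/n\in\Z$, which is precisely $\failure(\sequencetwo_k)\mid(k-1)!$.

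For the sign condition I substitute $(\mu*i^{\cdot})(n)=n L_i(n)$ and expand $L_i(n)=\sum_{j=1}^i\binom{i}{j}M_j(n)$, where $L_i(n)$ is the Lyndon-word count on an $i$-letter alphabet and $M_j(n)\ge0$ is the number of length-$n$ Lyndon words on $[j]$ that use every letter. Reordering the sums and doing a short binomial manipulation identifies the coefficient of $M_j(n)$ in $(\mu*\sequencetwo_k)(n)/n$ as $\Delta^{k-j}f(j)/(j!(k-j)!)$, with $f(x)=x^{k-1}$. Non-negativity of this $(k-j)$-th forward finite difference follows from $f^{(m)}\ge0$ on $[0,\infty)$ for $m\le k-1$ via the integral representation
\[
\Delta^m f(j)=\int_{[0,1]^m}f^{(m)}(j+t_1+\cdots+t_m)\,dt_1\cdots dt_m.
\]
Combined, this gives $(\mu*\sequencetwo_k)(n)\ge0$ and hence almost realizability for every $k\ge1$. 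The realizable range $k\le2$ is then immediate: $\sequencetwo_1$ is constantly $1$, realized by the identity on a single point, and $\sequencetwo_2=(2^n-1)_{n\ge1}$ is realized by the shift on the set of periodic non-constant sequences in $\{0,1\}^{\Z}$.

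The main obstacle is non-realizability for $k\ge3$: with the sign condition already in hand, I need to exhibit, for each such $k$, some $n$ with $n\nmid(\mu*\sequencetwo_k)(n)$. Since $(\mu*\sequencetwo_k)(2)=(k-1)(k+2)/2$, Dold fails at $n=2$ whenever $k\equiv0,3\pmod4$. For the residues $k\equiv1,2\pmod4$ I turn to $(\mu*\sequencetwo_k)(3)=\binom{k+2}{3}(3k+1)/4-1$ and apply Lucas' theorem to $\binom{k+2}{3}\pmod3$, finding failure at $n=3$ unless $k\equiv1,2,3\pmod9$; the six residue classes mod $36$ that survive both tests yield to a $2$-adic analysis of $(\mu*\sequencetwo_k)(4)=\binom{k+1}{2}(\binom{k+3}{4}-1)$, and any residues still surviving are caught at higher $n$ by the same recipe. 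Packaging this into a uniform statement---that for every $k\ge3$ some prime $p\le k-1$ together with some $n$ produce a $p$-adic Dold failure---is the principal technical step.
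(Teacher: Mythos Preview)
Your argument for the divisibility bound $\failure\bigl(\sequencetwo_k\bigr)\divides(k-1)!$ is essentially the paper's: both produce the identity
$(k-1)!\,\stirlingtwo{n+k-1}{k}=\sum_{i=1}^{k}(-1)^{k-i}\binom{k-1}{i-1}i^{n+k-2}$
and then use that each $(i^n)_{n\ge1}$ satisfies the Dold congruence. Your treatment of $k\le2$ is the same as well. Your sign-condition proof, by contrast, is genuinely different and somewhat more structural: the paper dispatches non-negativity with a one-line growth bound ($\stirlingtwo{2n+k-1}{k}\ge n\,\stirlingtwo{n+k-1}{k}$, which implies $(\mu*A)(n)\ge0$ whenever $A$ is increasing and $A_{2n}\ge nA_n$), whereas your Lyndon-word expansion with the forward-difference positivity of $x\mapsto x^{k-1}$ is correct and more informative, at the price of more machinery.

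The real gap is the non-realizability step for $k\ge3$. As you yourself flag, the modular casework at $n=2,3,4,\dots$ does not come with a termination mechanism: after $n=2$ and $n=3$ you are already carrying a list of residue classes modulo~$36$, and nothing you have written guarantees that the surviving classes are eventually emptied by higher $n$. Your closing sentence (``packaging this into a uniform statement \dots\ is the principal technical step'') concedes exactly this. The paper avoids the casework entirely: it proves the single congruence
\[
\stirlingtwo{p+k-1}{k}\equiv\Bigl\lceil\tfrac{k}{p}\Bigr\rceil\pmod{p}
\]
for every prime $p$, and then, given $k\ge3$, invokes Bertrand's postulate to produce a prime $p$ with $\sqrt{k}\le p<k$, so that $p<k\le p^2$ and hence $\lceil k/p\rceil\in\{2,\dots,p\}\not\equiv1\pmod p$. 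This exhibits a Dold failure at the prime index $n=p$ uniformly in $k$. If you want to rescue your approach you would need either a comparable uniform witness or a proof that the residue sieve terminates; absent that, this part of the proposal is incomplete.
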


\begin{proof}[Proof of Theorem~\ref{theoremsecondkind} for~$k\le2$.]
We have
\[
\sequencetwo_1=(1,1,1,1,\dots)
\]
which is realized by the identity map on a singleton.
For~$k=2$ we have
\[
\sequencetwo_2=(1,3,7,15,\dots)=(2^n-1).
\]
This is easily seen to be realizable, either by checking the
conditions or by noticing that if~$T$ is
the map~$z\mapsto z^2$ on~$\S^1=\{z\in\C\mid\vert z\vert=1\}$,
then~$\fix_{(X,T)}(n)=2^n-1$ for all~$n\ge1$.
\end{proof}

The non-trivial parts of the arguments below concern
proving that the Dold condition is---or is not---satisfied,
and to what extent.
The sequences we are concerned with grow very rapidly, so the
sign condition is not a concern, but for completeness
we include a simple argument
that verifies the
sign condition for~$\sequencetwo_k$.

\begin{lemma}\label{lemmaYash}
For~$k\ge1$ we have~$(\mu*\sequencetwo_k)(n)\ge0$ for all~$n\ge1$.
\end{lemma}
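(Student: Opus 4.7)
The plan is to expand $\stirlingtwo{n+k-1}{k}$ as an explicit integer linear combination of the powers $i^n$ using the classical closed form, distribute the Möbius convolution across this combination termwise, and then recognize the resulting alternating sum as a non-negative combinatorial count via inclusion--exclusion.

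First I would apply the identity
\[
\stirlingtwo{m}{k}=\frac{1}{k!}\sum_{i=0}^{k}(-1)^{k-i}\binom{k}{i}i^m,
\]
specialized to $m=n+k-1$, to write
\[
\stirlingtwo{n+k-1}{k}=\frac{1}{k!}\sum_{i=1}^{k}(-1)^{k-i}\binom{k}{i}i^{k-1}\cdot i^n
\]
(the $i=0$ term vanishes for $n\ge1$). Applying $\mu*$ termwise and using Witt's formula $\sum_{d\mid n}\mu(n/d)i^d=nL_n(i)$, where $L_n(i)$ is the number of Lyndon words of length $n$ over an alphabet of $i$ letters, one obtains
\[
(\mu*\sequencetwo_k)(n)=\frac{n}{k!}\sum_{i=1}^{k}(-1)^{k-i}\binom{k}{i}i^{k-1}L_n(i).
\]

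The remaining step is to interpret this alternating sum combinatorially. The product $i^{k-1}L_n(i)$ counts ordered pairs $(\ell,w)$ in which $\ell$ is a Lyndon word of length $n$ and $w$ is an arbitrary word of length $k-1$, both written over an alphabet of size $i$. A standard inclusion--exclusion over the subset of $[k]$ consisting of the letters that actually appear in the concatenation $\ell w$ then shows that
\[
\sum_{i=1}^{k}(-1)^{k-i}\binom{k}{i}i^{k-1}L_n(i)
\]
equals the number of such pairs over $[k]$ in which every one of the $k$ symbols is used. Being a cardinality, this is non-negative, so $(\mu*\sequencetwo_k)(n)\ge 0$.

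The main point that requires care is the inclusion--exclusion step: one must verify that for each subset $S\subseteq[k]$ of size $i$ the number of pairs $(\ell,w)$ whose letters all lie in $S$ is exactly $i^{k-1}L_n(i)$, so that Möbius inversion on the subset lattice of $[k]$ yields the claimed alternating formula. Once this is in place the other ingredients---the explicit $i$-expansion of Stirling numbers of the second kind and the non-negativity of Lyndon word counts---are entirely classical, and the sign condition follows at once.
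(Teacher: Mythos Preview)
Your argument is correct and takes a genuinely different route from the paper's own proof. The paper establishes the sign condition by a crude growth estimate: it observes that any increasing non-negative sequence~$(A_n)$ satisfying~$A_{2n}\ge nA_n$ automatically has~$(\mu*A)(n)\ge0$, and then checks this doubling inequality for~$\sequencetwo_k$ by the one-line combinatorial remark that each of the~$n$ extra elements in a set of size~$2n+k-1$ can be adjoined to any block of an existing partition of~$n+k-1$ elements. Your approach instead expands~$\stirlingtwo{n+k-1}{k}$ via the closed formula, pushes the M{\"o}bius convolution through to produce Lyndon counts~$L_n(i)$, and then reads the resulting alternating sum as an inclusion--exclusion over the letters actually used in a pair~$(\ell,w)$. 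The inclusion--exclusion step is sound: for each~$S\subseteq[k]$ of size~$i$, a word over~$[k]$ with all letters in~$S$ is aperiodic over~$[k]$ if and only if it is aperiodic over~$S$, so the number of such pairs is exactly~$i^{k-1}L_n(i)$, and M{\"o}bius inversion on the Boolean lattice gives the surjective count.

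What each approach buys: the paper's argument is shorter and applies to any sequence with the stated growth rate, but is purely an inequality and gives no structural information. Your argument is more work but yields an explicit combinatorial interpretation of~$\tfrac{k!}{n}(\mu*\sequencetwo_k)(n)$ as the number of pairs~$(\ell,w)$ with~$\ell$ a Lyndon word of length~$n$,~$w\in[k]^{k-1}$, jointly using every letter of~$[k]$; in particular it shows this quantity is a non-negative \emph{integer}, which is close in spirit to the almost-realizability statement proved later in the paper.
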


The proof of Theorem~\ref{theoremfirstkind} for~$k\ge3$
involves finding a prime index at which the Dold congruence fails---that is,
finding a witness to non-realizability like
the prime~$2$ in
Example~\ref{exampleNotRelisableAtn=2andk=3}.
In order to do this, we need some information on
the properties of~$\sequenceone$ modulo a prime.

\begin{lemma}\label{lemmaStirlingOneMainCombinatorialArgument}
For a prime~$p$ and~$k\in\N$ we have
\begin{equation}\label{equationStirlingOneValuesModulop1}
\stirlingone{p+k-1}{k}\equiv1\pmod{p}
\end{equation}
if and only if~$k\equiv j\pmod{p^2}$
for some~$j\in\{(p-2)p+1,(p-2)p+2,\dots,(p-1)p\}$.
For an odd prime~$p$ and~$k\in\N$ we have
\begin{equation}\label{equationStirlingOneValuesModulop2SignedCase}
\stirlingonesigned{p+k-1}{k}\equiv1\pmod{p}
\end{equation}
if and only if~$k\equiv j\pmod{p^2}$
for some~$j\in\{(p-2)p+1,(p-2)p+2,\dots,(p-1)p\}$.
\end{lemma}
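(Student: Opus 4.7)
The natural entry point is the generating function identity
\[
x(x+1)(x+2)\cdots(x+n-1) = \sum_{l=0}^{n} \stirlingone{n}{l}\, x^l
\]
with $n = p+k-1$, together with the polynomial form of Fermat's little theorem,
\[
x(x+1)\cdots(x+p-1) \equiv x^p - x \pmod{p}.
\]
My plan is to write $k-1 = qp + r$ with $0 \le r \le p-1$, so that $\prod_{i=0}^{p+k-2}(x+i)$ decomposes into $q+1$ consecutive blocks of length $p$ and a leftover block of length $r$. Reducing each of the length-$p$ blocks modulo $p$ then yields
\[
\prod_{i=0}^{p+k-2}(x+i) \equiv (x^p - x)^{q+1}\, x(x+1)\cdots(x+r-1) \pmod{p},
\]
with the convention that the last factor is $1$ when $r=0$.

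Next I would extract the coefficient of $x^k = x^{qp+r+1}$ from the right-hand side. Expanding
\[
(x^p - x)^{q+1} = \sum_{j=0}^{q+1} \binom{q+1}{j}(-1)^{q+1-j}\, x^{(p-1)j+q+1}
\]
and writing the second factor as $\sum_{l} \stirlingone{r}{l}\, x^l$, matching exponents forces $l = (p-1)(q-j) + r$. The constraint $0 \le l \le r$ combined with $r \le p-1$ and the vanishing $\stirlingone{r}{0} = 0$ for $r \ge 1$ pins down $j = q$, $l = r$ as the unique contributing pair, yielding coefficient $-(q+1)$ (the $r = 0$ case produces the same value directly from the single term $j = q$).

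From $\stirlingone{p+k-1}{k} \equiv -(q+1) \pmod{p}$, the condition~\eqref{equationStirlingOneValuesModulop1} rewrites as $q \equiv p-2 \pmod{p}$. Substituting $q = p-2 + mp$ for some $m \ge 0$ into $k = qp + r + 1$ gives $k \equiv (p-2)p + r + 1 \pmod{p^2}$ with $r+1 \in \{1,\dots,p\}$, which is exactly the stated residue set. For the signed version with $p$ odd, the identity $(-1)^{(p+k-1)-k} = (-1)^{p-1} = 1$ gives $\stirlingonesigned{p+k-1}{k} = \stirlingone{p+k-1}{k}$, so~\eqref{equationStirlingOneValuesModulop2SignedCase} follows immediately.

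The main obstacle I anticipate is isolating the single contributing $(j,l)$ pair in the convolution; the decisive point is that $r \le p-1$ prevents $j = q+1$ from giving a nonzero term, while $j \le q-1$ would force $l > r$. Everything else is routine expansion and bookkeeping.
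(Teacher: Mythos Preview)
Your argument is correct and reaches the same intermediate conclusion as the paper, namely
\[
\stirlingone{p+k-1}{k}\equiv -\left\lceil\tfrac{k}{p}\right\rceil\pmod{p},
\]
since your $q+1=\lfloor(k-1)/p\rfloor+1=\lceil k/p\rceil$. The route, however, is genuinely different. The paper works combinatorially: it writes $\stirlingone{p+k-1}{k}$ as a sum over cycle-type vectors $(j_1,\dots,j_p)$ via the exponential formula, then argues by case analysis on the sizes $j_\ell$ that every summand vanishes modulo $p$ except the one with a single $p$-cycle and $k-1$ fixed points; that surviving term is $\frac{1}{p}(p+k-1)_p$, which is then reduced using Wilson's theorem. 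Your approach instead exploits the generating-function identity for the rising factorial together with the polynomial Fermat congruence $x(x+1)\cdots(x+p-1)\equiv x^p-x\pmod{p}$, reducing the whole product to $(x^p-x)^{q+1}$ times a short tail, and then does a clean coefficient extraction. What your method buys is economy: no multinomial bookkeeping, no separate invocation of Wilson, and the uniqueness of the contributing $(j,l)$ pair falls out of a two-line inequality. What the paper's method buys is a transparent combinatorial interpretation of why only one cycle type survives, which parallels its later argument for $\sequencetwo_k$. Both end with the same final step for the signed case via $(-1)^{p-1}=1$.
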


\begin{proof}[Proof of Theorem~\ref{theoremfirstkind}.]
Let~$p$ be a prime and~$k\in\N$.
If~$k\le(p-2)p$, then we certainly have~$k\notin\{(p-2)p+1,\dots,(p-1)p\}\pmod{p^2}$.
By Lemma~\ref{lemmaStirlingOneMainCombinatorialArgument}
we have
\[
\stirlingone{p+k-1}{k}\not\equiv1\pmod{p}.
\]
Since~$\stirlingone{k}{k}=1$, we deduce that~$\bigl\vert\frac1p\bigl(\stirlingone{p+k-1}{k}-\stirlingone{k}{k}\bigr)\bigr\vert_p=p$,
so the set of denominators among the expressions~$\frac1n(\mu*\sequenceone)(n)$
contains infinitely many primes. Thus~$\sequenceone$ is not almost
realizable.

The same argument for the signed Stirling numbers is similar, and
shows that they fail the Dold congruence in the same strong way.
\end{proof}

The structure of the proof that~$\sequencetwo_k$ is not realizable for~$k\ge3$
is very similar to that
of Theorem~\ref{theoremfirstkind}, but the details differ.
To do this, once again some information on the properties of~$\sequencetwo_k$
modulo a prime is needed.

\begin{lemma}\label{lemmaS2CongruencesAndPeriodicity}
For a prime~$p$ and~$k\in\N$ we have
\begin{equation}\label{equationWhenIsS21ModuloP}
\stirlingtwo{p+k-1}{k}\equiv1\pmod{p}
\end{equation}
if and only if~$k\equiv j\pmod{p^2}$ for some~$j\in\{1,\dots,p\}$.
\end{lemma}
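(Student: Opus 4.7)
The plan is to use the generating-function identity
\[
\sum_{n \geq k}\stirlingtwo{n}{k}x^{n-k} = \frac{1}{\prod_{j=1}^{k}(1 - jx)},
\]
so that $\stirlingtwo{p+k-1}{k}$ equals the coefficient of $x^{p-1}$ on the right-hand side. After reducing this rational function modulo $p$ to something tractable, extracting the coefficient becomes routine.

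Write $k = qp + r$ with $0 \leq r \leq p-1$. Among the factors $(1 - jx)$ for $j = 1,\dots,k$, the $q$ multiples of $p$ are each $\equiv 1 \pmod p$, while the remaining indices cover each nonzero residue class modulo $p$ exactly $q$ times together with one extra copy each of $1,2,\dots,r$. Combined with the classical identity $\prod_{j=1}^{p-1}(1 - jx) \equiv 1 - x^{p-1} \pmod p$ (obtained from $\prod_{j=1}^{p-1}(X - j) \equiv X^{p-1} - 1 \pmod p$ by substituting $X = 1/x$ and multiplying through by $x^{p-1}$), this yields
\[
\prod_{j=1}^{k}(1 - jx) \equiv (1 - x^{p-1})^{q}\prod_{j=1}^{r}(1 - jx) \pmod p.
\]
Since $\frac{1}{(1 - x^{p-1})^{q}} = 1 + qx^{p-1} + O(x^{2(p-1)})$, only degrees $0$ and $p-1$ from $\frac{1}{\prod_{j=1}^{r}(1-jx)}$ can contribute, giving
\[
\stirlingtwo{p+k-1}{k} \equiv q + [x^{p-1}]\frac{1}{\prod_{j=1}^{r}(1 - jx)} \pmod p.
\]
For $r = 0$ the remaining coefficient vanishes; for $1 \leq r \leq p-1$ it equals $\stirlingtwo{p+r-1}{r}$, and since $r!$ is invertible modulo $p$ one may apply the explicit formula $\stirlingtwo{n}{r} = \frac{1}{r!}\sum_{i=0}^{r}(-1)^{r-i}\binom{r}{i}i^{n}$ together with Fermat's little theorem ($i^{p-1} \equiv 1 \pmod p$ for $1 \leq i \leq r$) to deduce $\stirlingtwo{p+r-1}{r} \equiv \stirlingtwo{r}{r} = 1 \pmod p$.

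Combining, $\stirlingtwo{p+k-1}{k} \equiv q \pmod p$ when $r = 0$ and $\equiv q + 1 \pmod p$ when $1 \leq r \leq p-1$. This equals $1 \pmod p$ precisely when either $r = 0$ and $q \equiv 1 \pmod p$ (so $k = qp \equiv p \pmod{p^{2}}$), or $1 \leq r \leq p-1$ and $q \equiv 0 \pmod p$ (so $k \equiv r \pmod{p^{2}}$); together these are exactly the congruences $k \equiv j \pmod{p^{2}}$ for $j \in \{1,\dots,p\}$. The main obstacle I anticipate is the residue-class bookkeeping in the middle step: verifying carefully the multiplicity of each factor $(1 - jx)$ in the reduced product and deploying $\prod_{j=1}^{p-1}(1-jx) \equiv 1 - x^{p-1} \pmod p$ at the right moment to collapse the $q$ complete cycles. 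Once that reduction is secured, the coefficient extraction and the final arithmetic case split are short.
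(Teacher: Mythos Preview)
Your argument is correct, and it reaches the same intermediate conclusion as the paper---namely that $\stirlingtwo{p+k-1}{k}\equiv\lceil k/p\rceil\pmod p$---but by a genuinely different route. The paper works combinatorially: it expands $\stirlingtwo{p+k-1}{k}$ as a sum over partition types $(j_1,\dots,j_p)$, argues that modulo~$p$ every summand vanishes except the one with $(j_1,\dots,j_p)=(k-1,0,\dots,0,1)$, and so obtains the binomial congruence $\stirlingtwo{p+k-1}{k}\equiv\binom{p+k-1}{p}\pmod p$; it then evaluates this binomial via an orbit-counting argument under a permutation with $p$-cycles. Your approach is algebraic: you reduce the generating-function denominator $\prod_{j=1}^{k}(1-jx)$ modulo~$p$ using the identity $\prod_{j=1}^{p-1}(1-jx)\equiv 1-x^{p-1}$, then read off the coefficient of~$x^{p-1}$ directly. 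Your method is shorter and relies only on standard machinery (formal power series over~$\F_p$, Fermat's little theorem), while the paper's version yields the intermediate binomial congruence as a byproduct and stays within the orbit-counting theme that pervades the rest of the article. The residue-class bookkeeping you flagged as a potential obstacle is in fact routine: writing $k=qp+r$, each nonzero residue $a\le r$ occurs $q+1$ times and each $a>r$ occurs $q$ times among $1,\dots,k$, which immediately gives the factorisation $(1-x^{p-1})^{q}\prod_{j=1}^{r}(1-jx)$ you need.
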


\begin{proof}[Proof of Theorem~\ref{theoremsecondkind} for~$k\ge3$: non-realizability.]
If~$A$ is a realizable sequence,
then the Dold condition~\eqref{equationDoldCondition}
requires~$A_p\equiv A_1\pmod{p}$ .
So it is enough to show that for~$k\ge3$ there is
a prime~$p$ for which~$\stirlingtwo{p+k-1}{k}\not\equiv 1
\pmod{p}$.
By Lemma~\ref{lemmaS2CongruencesAndPeriodicity},
it is enough to find a prime~$p$ with~$k\notin\{1,\dots,p\}\pmod{p^2}$.
This in turn certainly follows from the ({\it{a priori}} stronger)
claim that
\begin{equation}\label{equationUsingBertrand}
\N_{\ge3}=\{n\in\N\mid n\ge3\}
\subset\bigcup_{p\in\PP}(p,p^2],
\end{equation}
since~$k\in(p,p^2]$ precludes~$k\in\{1,\dots,p\}\pmod{p^2}$.
For~$k\ge3$ we therefore seek a prime with~$p<k\le p^2$,
or equivalently with~$\sqrt{k}\le p<k$.
For~$k=3$ we have~$\sqrt{3}\le2<3$.
For~$k\ge4$ we have~$2\le\sqrt{k}\le\frac{k}{2}$,
and by Bertrand's postulate there is a prime~$p$
with~$\frac{k}{2}<p<k$, hence with~$\sqrt{k}\le p<k$,
and so we have~$p<k\le p^2$, which proves~\eqref{equationUsingBertrand}.
\end{proof}

\begin{proof}[Proof of Theorem~\ref{theoremsecondkind} for~$k\ge3$: almost realizability.]
Assume that~$k\ge3$. The statement we wish to prove is
that~$(k-1)!\sequencetwo_k$ is realizable, and we do this
by starting with one of the closed formulas for
the Stirling numbers of the second kind,
\begin{equation*}\label{equationStirlingSecondKindClosedForm1}
\stirlingtwo{n}{k}=\frac{1}{k!}\sum_{j=1}^{k}(-1)^{k-j}\binom{k}{j}j^n
\end{equation*}
for~$n\in\N$ and~$1\le k\le n$.
It follows that
\begin{align*}
(k-1)!\stirlingtwo{n}{k}
=
\frac{1}{k}\sum_{j=1}^{k}(-1)^{k-j}\binom{k}{j}j^n
&=
\sum_{j=1}^{k}(-1)^{k-j}\frac{j}{k}\frac{(k)_j}{j!}j^{n-1}\\
&=
\sum_{j=1}^{k}(-1)^{k-j}\frac{(k-1)_{j-1}}{(j-1)!}j^{n-1}\\
&=
\sum_{j=1}^{k}(-1)^{k-j}\binom{k-1}{j-1}j^{n-1},
\end{align*}
where we have used the notation~$(x)_n$ for the falling factorial
as usual, which means that~$(k-1)!\stirlingtwo{n}{k}$
is an integral linear combination of the
functions~$n\mapsto j^{n-1}$.
Write~$\nu_p(n)$ for the power of~$p$ dividing~$n$, so~$\vert n\vert_p=p^{-\nu_p(n)}$.
We claim that
\begin{equation}\label{equationS2SimplifypAdic1}
\sum_{d\smalldivides n}\mu(d)(k-1)!\stirlingtwo{\tfrac{n}{d}+k-1}{k}
\equiv
0\pmod{p^{\nu_p(n)}}
\end{equation}
for any prime~$p$ dividing~$n$.
To see this, we first write
\[
\sum_{d\smalldivides n}\mu(d)(k-1)!\stirlingtwo{\tfrac{n}{d}+k-1}{k}
=\sum_{\substack{d\smalldivides(n/p)\\p\smallnotdivides d}}
\mu(d)C(n,k,d,p)
\]
where
\[
C(n,k,d,p)
\!=\!
\(\!(k-1)!\stirlingtwo{\tfrac{n}{d}+k-1}{k}
-(k-1)!\stirlingtwo{\tfrac{n}{pd}+k-1}{k}\!\)\!.
\]
Write~$\frac{n}{d}=mp^a$ where~$a=\nu_p(n)$,
for some~$m\in\N$ for~$d$ a divisor of~$\frac{n}{p}$ not
divisible by~$p$.
Then,
\begin{align}
C(n,k,d,p)
&=
\sum_{j=1}^{k}(-1)^{k-j}\binom{k-1}{j-1}\(j^{\frac{n}{d}-1+k-1}-j^{\frac{n}{pd}-1+k-1}\)\nonumber\\
&=
\sum_{j=1}^{k}(-1)^{k-j}\binom{k-1}{j-1}\(j^{mp^a+k-2}-j^{mp^{a-1}+k-2}\)\nonumber\\
&=
\sum_{j=1}^{k}(-1)^{k-j}\binom{k-1}{j-1}j^{mp^{a-1}+k-2}\(j^{mp^{a-1}(p-1)}-1\).\label{equationLastExpressionForC1}
\end{align}
Now~$p^a\divides\(j^{mp^{a-1}(p-1)}-1\)$ by Euler's theorem if~$p\notdivides j$,
and~$p^a\divides j^{p^{a-1}}$ if~$p\divides j$, so~\eqref{equationLastExpressionForC1}
vanishes modulo~$p^a$.
That is, we have proved the claim~\eqref{equationS2SimplifypAdic1}.

By taking the prime decomposition of~$n$, we deduce that
\[
n\divides(\mu*(k-1)!\sequencetwo_k)(n)
\]
for all~$n\ge1$, so~$(k-1)!\sequencetwo_k$ satisfies the Dold condition.
Almost realizability follows by Lemma~\ref{lemmaYash}.
\end{proof}

\section{Combinatorial Proofs}
\label{sectionLemmaProofs}

We assemble here the proofs of the purely combinatorial
lemmas used in the previous section.

\begin{proof}[Proof of Lemma~\ref{lemmaYash}.]
We start with a simple observation from Puri's thesis~\cite{yash}:
if~$(A_n)$ is an increasing sequence of non-negative
real numbers with
\begin{equation}\label{equationGrowthConditionDoubling}
A_{2n}\ge nA_n
\end{equation}
for all~$n\ge1$,
then we claim that
\begin{equation}\label{equationYash1}
(\mu* A)_n\ge0
\end{equation}
for all~$n\ge1$.
To see this, notice that
if~$n$ is even, then we have
\begin{align*}
(\mu*A)_{2n}
=
\sum_{d\smalldivides2n}\mu(2n/d)A_d
\ge
A_{2n}-\sum_{k=1}^{n}A_k
\ge
A_{2n}-nA_n\ge0,
\end{align*}
since the largest proper divisor of~$2n$ is~$n$.
Similarly, in the odd case we have
\begin{align*}
(\mu*A)_{2n+1}
\ge
A_{2n+1}-\sum_{k=1}^{n}A_k\ge A_{2n}-nA_n\ge0,
\end{align*}
since the largest proper divisor of~$2n+1$ is smaller than~$n$,
proving~\eqref{equationYash1}.

For~$k\le2$ we know that~$\stirlingtwo_k$ is
realizable, and so has~$(\mu*\sequencetwo_k)(n)\ge0$ for all~$n\ge1$.

So it is enough to verify that~$\sequencetwo_k$
satisfies~\eqref{equationGrowthConditionDoubling} for~$k\ge3$.
By definition, we know that~$\stirlingtwo{n+k-1}{k}$
is the number of partitions of a set with~$n+k-1$
elements into~$k$ non-empty subsets, and~$\stirlingtwo{2n+k-1}{k}$
is the number of partitions of a set with an additional~$n$
elements into~$k$ non-empty subsets. We can adjoin each of these~$n$ elements to
any one of the subsets in a partition of the
set with~$n+k-1$ elements, showing that
\[
\stirlingtwo{2n+k-1}{k}\ge n\cdot\stirlingtwo{n+k-1}{k}
\]
for all~$n\ge1$, as required.
\end{proof}

\begin{proof}[Proof of Lemma~\ref{lemmaStirlingOneMainCombinatorialArgument}]
Assume that~$n\ge2$, and make use of the
definition of~$\stirlingone{n}{k}$
as the number of permutations of~$\{1,\dots,n\}$
with exactly~$k$ cycles.
Let~$\Sigma$ be a set with~$p+k-1$ elements,
and assume that we have partitioned~$\Sigma$
into~$k$ disjoint non-empty cycles~$\Sigma_1,\Sigma_2,\dots,\Sigma_k$.
We claim that any one of these cycles~$\Sigma_{j}$ has
length no more than~$p$, because each
of the other~$k-1$ subsets in the partition
has at least one element, and so
\[
\vert\Sigma_{j}\vert
=
p+k-1-
\Bigl\vert\bigsqcup_{i\neq j}\Sigma_i\Bigr\vert
\le
p+k-1-(k-1)=p.
\]
For~$\ell=1,\dots,p$ let
\[
j_{\ell}=\vert\{i\mid1\le i\le k,\vert\Sigma_i\vert=\ell\}\vert
\]
denote the number of subsets in the partition with exactly~$\ell$ elements.
It follows that
\begin{equation*}\label{equationStirlingOnePartition1}
\sum_{\ell=1}^{p}j_{\ell}=k
\end{equation*}
since~$\Sigma$
is partitioned into~$k$ non-empty subsets,
and
\begin{equation*}\label{equationStirlingOnePartition2}
\sum_{\ell=1}^{p}\ell j_{\ell}
=\vert\Sigma\vert=p+k-1.
\end{equation*}
In order to count the ways in which~$\Sigma$ can be
partitioned into~$k$ cycles, notice first that
we can begin by partitioning the set into~$j_1$
singletons,~$j_2$ subsets of caridnality~$2$,~$j_3$
of cardinality~$3$, and so on in
\[
\frac{(p+k-1)!}{(1!)^{j_1}j_1!\cdot(2!)^{j_2}j_2!\cdot(3!)^{j_3}j_3!\cdots(p!)^{j_p}j_p!}
\]
ways and then form a cycle from one of the
subsets of cardinality~$j$ in~$(j-1)!$ ways
by fixing the first element in the subset to be the
starting point of the cycle, and then
order the remaining elements in the cycle in any one of~$(j-1)!$
ways.
It follows that we can partition~$\Sigma$ into~$k$
disjoint cycles with~$j_{\ell}$ subsets of cardinality~$\ell$
for~$1\le\ell\le p$ in
\[
\frac{(p+k-1)!}{1^{j_1}j_1!\cdot2^{j_2}j_2!\cdot3^{j_3}j_3!\cdots p^{j_p}j_p!}
\]
ways.
This gives
\begin{equation}\label{equationFormula1ForStirlingOneNumbers}
\stirlingone{p+k-1}{k}
=
\sum_{\substack{(j_1,\dots,j_p)\in\N^p\\
j_1+j_2+\cdots+j_p=k\\
j_1+2j_2+\cdots+pj_p=p+k-1}}
\frac{(p+k-1)!}{1^{j_1}j_1!\cdot2^{j_2}j_2!\cdot3^{j_3}j_3!\cdots p^{j_p}j_p!}.
\end{equation}
We will need some simple counting arguments, as follows:
\begin{enumerate}
\item[(a)] If~$j_{\ell}\ge p$ for some~$\ell\in\{2,\dots,p\}$, then we
have a partition of~$\Sigma$ with at least~$p$ cycles of length
at least~$2$ and~$k-j_{\ell}$ cycles of length at least~$1$.
This requires that~$\vert\Sigma\vert\ge 2j_{\ell}+(k-j_{\ell})\ge k+j_{\ell}\ge k+p$,
a contradiction. So we have~$j_{\ell}<p$ for~$\ell\in\{2,\dots,p\}$.
\item[(b)] If~$j_1=k$, then~$\Sigma$ is partitioned into~$k$
singletons, contradicting~$\vert\Sigma\vert=p+k-1$. Thus~$j_1\le k-1$.
\item[(c)] Finally, if~$j_p>0$, then~$j_p=1$,~$j_{1}=k-1$
and~$j_{\ell}=0$ for~$\ell\in\{2,\dots,p-1\}$.
Indeed, if~$j_p\ge1$, then we have a partition of~$\Sigma$
with~$j_p$ cycles of length~$p$ and~$k-j_{p}$ cycles
of length at least~$1$. This implies that
\[
k+p-1=\vert\Sigma\vert\ge
pj_p+(k-j_p)=k+j_p(p-1),
\]
which is possible only if~$j_p=1$
and all the cycles of length less than~$p$ are singletons.
\end{enumerate}

Now assume that~$(j_1,j_2,\dots,j_{p-1},j_p)\neq(k-1,0,\dots,0,1)$,
so~$j_p=0$ by~(c) and the corresponding term in~\eqref{equationFormula1ForStirlingOneNumbers}
takes the form
\[
\frac{(p+k-1)!}{1^{j_1}j_1!\cdot2^{j_2}j_2!\cdot3^{j_3}j_3!\cdots(p-1)^{j_{p-1}}j_{p-1}!},
\]
in which the values of~$\ell$ and of~$j_{\ell}!$
for~$\ell\in\{2,\dots,p-1\}$ are not divisible
by~$p$, since~$\ell$ and~$j_{\ell}$ are smaller
than~$p$ (the latter by~(a)).
Since~$j_1\le k-1$, we know that~$\vert(p-k+1)!\vert_p<\vert j_1!\vert_p$. It follows that the term in~\eqref{equationFormula1ForStirlingOneNumbers}
corresponding to any~$(j_1,j_2,\dots,j_{p-1},j_p)\neq(k-1,0,\dots,0,1)$
vanishes modulo~$p$.
It follows that~$\stirlingone{p+k-1}{k}$ is congruent modulo~$p$
to the term corresponding to~$(j_1,j_2,\dots,j_{p-1},j_{p})
=(k-1,0,\dots,0,1)$, giving
\begin{equation}\label{equationInMrsInglethorpesRoom1}
\stirlingone{p+k-1}{k}
=
\frac{(p+k-1)!}{(k-1)!p}
=
\frac{1}{p}(p+k-1)_p,
\end{equation}
where we have used the usual
notation~$(x)_n=x(x-1)(x-2)\cdots(x-n+1)$ for the falling
factorial.

We now wish to simplify the expression in~\eqref{equationInMrsInglethorpesRoom1}.
Let~$m=\lceil\frac{k}{p}\rceil$ so that~$k=mp-i$
for some~$i\in\{0,\dots,p-1\}$.
Then
\begin{align*}
\frac{1}{p}(p+k-1)_p
&=
\frac{1}{p}\prod_{j=0}^{p-1}(k+j)\\
&=
\frac{1}{p}\prod_{j=0}^{p-1}(mp-i+j)\\
&=
\frac{1}{p}\left(\prod_{j=p-i}^{p-1}((m-1)p+j)\right)
\cdot mp\cdot\left(\prod_{j=1}^{p-i-1}(mp+j)\right)\\
&=
m\cdot\left(\prod_{j=p-i}^{p-1}j\right)\cdot\left(\prod_{j=1}^{p-i-1}j\right)
\equiv
-m\pmod{p}
\end{align*}
by Wilson's theorem.
We deduce that
\[
\stirlingone{p+k-1}{k}=-\left\lceil\frac{k}{p}\right\rceil.
\]
It follows that in order for the congruence~\eqref{equationStirlingOneValuesModulop1}
to hold, we must have~$m=np+p-1$
for some~$n\in\N$, which is equivalent to
\[
k=mp-i=np^2+(p-1)p-i
\]
with~$i\in\{0,1,\dots,p-1\}$.
This shows the first part of the lemma.

Finally,~\eqref{equationStirlingOneValuesModulop2SignedCase}
follows from the fact
that
\begin{align*}
\stirlingonesigned{p+k-1}{k}
&=
(-1)^{p+k-1-k}\stirlingone{p+k-1}{k}\\
&=
(-1)^{p-1}\stirlingone{p+k-1}{k}\\
&=
\stirlingone{p+k-1}{k}
\end{align*}
for an odd prime~$p$.
\end{proof}

\begin{proof}[Proof of Lemma~\ref{lemmaS2CongruencesAndPeriodicity}]
The proof proceeds in two stages, the first of which is
to find a binomial congruence for~$\sequencetwo_k$.
We let~$\Sigma$ be a set with~$p+k-1$ elements,
and note first that if~$\Sigma$ is partitioned into~$k$
non-empty subsets~$\Sigma_1,\dots,\Sigma_k$,
then
\[
\vert\Sigma_j\vert
=
p+k-1-\Bigl\vert\bigsqcup_{i\neq j}\Sigma_i\Bigr\vert
\le
p+k-1-(k-1)=p
\]
for~$j=1,\dots,k$.
For~$\ell=1,\dots,p$ let
\[
j_{\ell}=\vert\{i\mid1\le i\le k,\vert\Sigma_i\vert=\ell\}\vert
\]
denote the number of subsets in the partition with exactly~$\ell$ elements.
By construction
\begin{equation}\label{equationPartition1}
\sum_{\ell=1}^{p}j_{\ell}=k
\end{equation}
since~$\Sigma$
is partitioned into~$k$ non-empty subsets,
and
\begin{equation}\label{equationPartition2}
\sum_{\ell=1}^{p}\ell j_{\ell}
=\vert\Sigma\vert=p+k-1.
\end{equation}
Since we are not interested in the
arrangement of elements within each subset of the partition,
the number of ways to choose this partition is given by
\[
\frac{(p+k-1)!}{(1!)^{j_1}j_1!\cdot(2!)^{j_2}j_2!\cdot(3!)^{j_3}j_3!\cdots(p!)^{j_p}j_p!}.
\]
Clearly the conditions~\eqref{equationPartition1} and~\eqref{equationPartition2}
are the only requirements for a partition into~$k$ non-empty subsets,
so we can use these conditions to parameterise all such partitions.
Thus we can write
\begin{equation}\label{equationFormula1ForStirlingTwoNumbers}
\stirlingtwo{p+k-1}{k}
=\!\!\!\!
\sum_{\substack{(j_1,\dots,j_p)\in\N^p\\
j_1+j_2+\cdots+j_p=k\\
j_1+2j_2+\cdots+pj_p=p+k-1}}\!\!\!\!
\frac{(p+k-1)!}{(1!)^{j_1}j_1!\!\cdot\!(2!)^{j_2}j_2!\!\cdot\!(3!)^{j_3}j_3!\cdots(p!)^{j_p}j_p!}
\end{equation}
We will need some simple counting arguments, as follows:
\begin{enumerate}
\item[(a)] If~$j_{\ell}\ge p$ for some~$\ell\in\{2,\dots,p\}$,
then there
is a corresponding partition of~$\Sigma$ with at least~$p$
subsets of cardinality at least~$2$ and~$k-p$ subsets
of cardinality at least~$1$. This requires that~$\Sigma$
has cardinality at least
\[
2p+(k-p)>p+k-1,
\]
a contradiction.
So~$j_{\ell}<p$ for all~$\ell\in\{2,\dots,p\}$.
\item[(b)] If~$j_1=k$, then~$\Sigma$ is partitioned
into~$k$ singletons, which requires
\[
\vert\Sigma\vert=k<p+k-1,
\]
a contradiction. So~$j_1\le k-1$.
\item[(c)] If~$j_p\ge1$, then~$\Sigma$ is partitioned
into~$j_p$ subsets of cardinality~$p$
and~$k-j_p$ subsets of cardinality at least~$1$.
This requires
\[
\vert\Sigma\vert
=
p+k-1
\ge
pj_p+(k-j_p)
=
k+j_p(p-1),
\]
which is possible only if~$j_p=1$
and all the subsets of cardinality less than~$p$
are singletons.
\end{enumerate}
These observations allow the expression~\eqref{equationFormula1ForStirlingTwoNumbers}
to be simplified modulo~$p$ as follows.
For a summand with
\[
(j_1,j_2,\dots,j_{p-1},j_p)\neq(k-1,0,\dots,0,1)
\]
we must have~$j_p=0$ by~(c) and the corresponding summand
has the form
\[
\frac{(p+k-1)!}{(1!)^{j_1}j_1!\cdot(2!)^{j_2}j_2!\cdot(3!)^{j_3}j_3!\cdots((p-1)!)^{j_{p-1}}j_{p-1}!},
\]
where for each~$\ell\in\{2,\dots,p-1\}$ the values of~$\ell!$
and~$j_{\ell}!$ are not divisible by~$p$
(since~$\ell,j_{\ell}<p$).
We also have~$j_1\le k-1$ by~(b),
so~$\vert(p+k-1)!\vert_p<\vert j_1!\vert_p$,
and hence any summand corresponding to~$(j_1,j_2,\dots,j_{p-1},j_p)\neq(k-1,0,\dots,0,1)$
vanishes modulo~$p$.
Thus the only term that contributes to~\eqref{equationFormula1ForStirlingTwoNumbers}
modulo~$p$ is the term corresponding to~$(j_1,j_2,\dots,j_{p-1},j_p)=(k-1,0,\dots,0,1)$,
which is
\[
\frac{(p+k-1)!}{(k-1)!p!}=\binom{p+k-1}{p}.
\]
It follows that
\begin{equation}\label{equationBinomialCongruence1}
\stirlingtwo{p+k-1}{k}
\equiv
\binom{p+k-1}{p}\pmod{p}.
\end{equation}
There are well-known ways to express the right-hand side
of~\eqref{equationBinomialCongruence1}. For completeness we
give an argument in the spirit of counting cycles
and the Dold congruence which
lie behind all our results.
Write~$k+p-1$ as~$ap+b$ with~$0\le b<p$
and~$a=\lfloor\frac{k+p-1}{p}\rfloor$
and use these numbers~$a$ and~$b$ to decompose~$\Sigma=\{1,2,\dots,ap+b\}$
into~$a$ groups of~$p$ numbers and one group of~$b$
remaining numbers
(see Figure~\ref{figureA}).

\beginfig
\psfrag{A}{$\overbrace{\hspace{53pt}}^{p}$}
\psfrag{B}{$\overbrace{\hspace{53pt}}^{p}$}
\psfrag{C}{$\overbrace{\hspace{53pt}}^{p}$}
\psfrag{D}{$\overbrace{\hspace{13pt}}^b$}
\psfrag{E}{$\underbrace{\hspace{189pt}}_{ap+b}$}
\scalebox{0.9}{\includegraphics{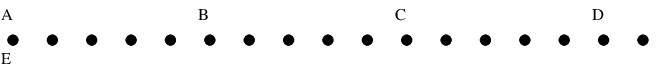}}
\caption{\label{figureA}Defining the cycles in~$\sigma$.}
\endfig

Define a map~$\sigma\colon\Sigma\to\Sigma$
by
\[
\sigma(j)
=
\begin{cases}
j&\mbox{for }j>ap;\\
j+1&\mbox{for }j\le ap,p\notdivides j;\\
j-p+1&\mbox{for }p\divides j.
\end{cases}
\]
The map~$\sigma$ is a permutation of~$\Sigma$
with cycle type given by~$a$ cycles of length~$p$ and~$b$
cycles of length~$1$
(see Figure~\ref{figureB} for an illustration of
this).
Thus~$\sigma^p$ is the identity map.

\beginfig
\psfrag{1}{$1$}
\psfrag{17}{$17$}
\scalebox{0.9}{\includegraphics{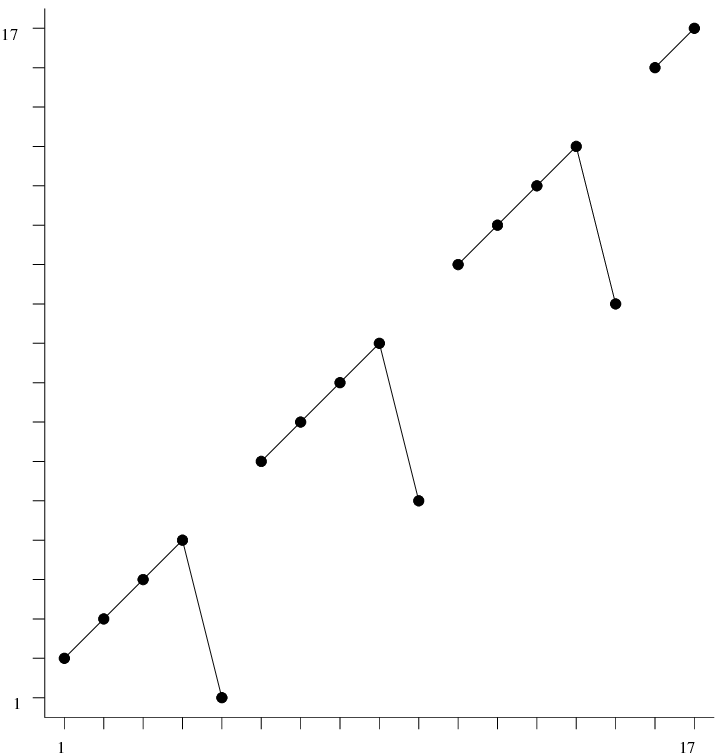}}
\caption{\label{figureB}The permutation~$\sigma$ for~$p=5$,~$a=3$, and~$b=2$.}
\endfig

Now~$\binom{ap+b}{p}$ is the number of
subsets of~$\Sigma$ with cardinality~$p$.
Each of the sets~$\{1,\dots,p\},\{p+1,\dots,2p\},\dots,\{ap-p+1,\dots,ap\}$
are fixed by~$\sigma$, so they each have an orbit
of length~$1$ under the action of~$\sigma$ on
the set of subsets of~$\Sigma$ of cardinality~$p$.
All other subsets have orbits of length~$p$
since~$\sigma$ does not fix them while~$\sigma^p$
is the identity. Thus~$\binom{ap+b}{p}-a$
is divisible by~$p$, and so
\[
\stirlingtwo{p+k-1}{k}
\equiv
\binom{p+k-1}{p}
\equiv
a
=
\left\lceil\frac{k}{p}\right\rceil
\pmod{p}.
\]
This gives~\eqref{equationWhenIsS21ModuloP},
and hence proves the lemma.
\end{proof}

\section{The Sequence of Repair Factors}

Implicit in Definition~\ref{definitionAlmostRealizable} is
the `repair factor'~$\failure(A)$ for an almost realizable
sequence~$A$.
Theorem~\ref{theoremsecondkind} thus gives rise to
a well-defined sequence~$\bigl(\failure(\sequencetwo_k)\bigr)_{k\ge1}$.
We know that~$\failure(\sequencetwo_k)$ divides~$(k-1)!$.
Some calculations are shown in Table~\ref{table}. Here we list hypothetical values of~$\failure(\sequencetwo_k)$ computed as the least common multiples of
denominators of~$\frac{1}{n}(\mu\ast\sequencetwo_k)(n)$ for~$n\in\{1,\ldots,3000\}$,
and then tested as realizable for~$n\leq 50000$. In order to distinguish them from the actual values, we denote them by~$\failure'(\sequencetwo_k)$. We have included columns
showing the prime factorization,
and showing~$\failure'(\sequencetwo_k)/\rad\bigl((k-1)!\bigr)$
(here~$\rad\bigl( m\bigr)$ denotes the greatest square-free divisor,
or radical, of~$m$) as a possible
aid towards trying to answer the natural
question: Is there a closed expression for~$\failure(\sequencetwo_k)$?

\begin{table}[h!]
\begin{center}
\caption{\label{table}Hypothetical values of the repair factor for~$\sequencetwo_k$
in the range $1\le k\le 40$, according to numerical computations.}
\begin{tabular}{c|c|l|c}
$k$&$\failure'(\sequencetwo_k)$&Factorization of $\failure'(\sequencetwo_k)$&$\vphantom{\frac{\sum_A^A}{\sum_A^A}}\frac{\failure'(\sequencetwo_k)}{\rad((k-1)!)}$\\
\hline
1&1&1&1\\
2&1&1&1\\
3&2&$2^1$&1\\
4&6&$2^1\cdot 3^1$&1\\
5&12&$2^2\cdot 3^1$&2\\
6&60&$2^2\cdot 3^1\cdot 5^1$&2\\
7&30&$2^1\cdot 3^1\cdot 5^1$&1\\
8&210&$2^1\cdot 3^1\cdot 5^1\cdot 7^1$&1\\
9&840&$2^3\cdot 3^1\cdot 5^1\cdot 7^1$&4\\
10&2520&$2^3\cdot 3^2\cdot 5^1\cdot 7^1$&12\\
11&1260&$2^2\cdot 3^2\cdot 5^1\cdot 7^1$&6\\
12&13860&$2^2\cdot 3^2\cdot 5^1\cdot 7^1\cdot 11^1$&6\\
13&13860&$2^2\cdot 3^2\cdot 5^1\cdot 7^1\cdot 11^1$&6\\
14&180180&$2^2\cdot 3^2\cdot 5^1\cdot 7^1\cdot 11^1\cdot 13^1$&6\\
15&90090&$2^1\cdot 3^2\cdot 5^1\cdot 7^1\cdot 11^1\cdot 13^1$&3\\
16&30030&$2^1\cdot 3^1\cdot 5^1\cdot 7^1\cdot 11^1\cdot 13^1$&1\\
17&240240&$2^4\cdot 3^1\cdot 5^1\cdot 7^1\cdot 11^1\cdot 13^1$&8\\
18&4084080&$2^4\cdot 3^1\cdot 5^1\cdot 7^1\cdot 11^1\cdot 13^1\cdot 17^1$&8\\
19&6126120&$2^3\cdot 3^2\cdot 5^1\cdot 7^1\cdot 11^1\cdot 13^1\cdot 17^1$&12\\
20&116396280&$2^3\cdot 3^2\cdot 5^1\cdot 7^1\cdot 11^1\cdot 13^1\cdot 17^1\cdot 19^1$&12\\
21&58198140&$2^2\cdot 3^2\cdot 5^1\cdot 7^1\cdot 11^1\cdot 13^1\cdot 17^1\cdot 19^1$&6\\
22&58198140&$2^2\cdot 3^2\cdot 5^1\cdot 7^1\cdot 11^1\cdot 13^1\cdot 17^1\cdot 19^1$&6\\
23&29099070&$2^1\cdot 3^2\cdot 5^1\cdot 7^1\cdot 11^1\cdot 13^1\cdot 17^1\cdot 19^1$&3\\
24&669278610&$2^1\cdot 3^2\cdot 5^1\cdot 7^1\cdot 11^1\cdot 13^1\cdot 17^1\cdot 19^1\cdot 23^1$&3\\
25&892371480&$2^3\cdot 3^1\cdot 5^1\cdot 7^1\cdot 11^1\cdot 13^1\cdot 17^1\cdot 19^1\cdot 23^1$&4\\
26&4461857400&$2^3\cdot 3\cdot 5^2\cdot 7^1\cdot 11^1\cdot 13^1\cdot 17^1\cdot 19^1\cdot 23^1$&20\\
27&2230928700&$2^2\cdot 3^1\cdot 5^2\cdot 7^1\cdot 11^1\cdot 13^1\cdot 17^1\cdot 19^1\cdot 23^1$&10\\
28&20078358300&$2^2\cdot 3^3\cdot 5^2\cdot 7^1\cdot 11^1\cdot 13^1\cdot 17^1\cdot 19^1\cdot 23^1$&90\\
29&20078358300&$2^2\cdot 3^3\cdot 5^2\cdot 7^1\cdot 11^1\cdot 13^1\cdot 17^1\cdot 19^1\cdot 23^1$&90\\
30&582272390700&$2^2\cdot 3^3\cdot 5^2\cdot 7^1\cdot 11^1\cdot 13^1\cdot 17^1\cdot 19^1\cdot 23^1\cdot 29^1$&90\\
31&291136195350&$2^1\cdot 3^3\cdot 5^2\cdot 7^1\cdot 11^1\cdot 13^1\cdot 17^1\cdot 19^1\cdot 23^1\cdot 29^1$&45\\
32&9025222055850&$2^1\cdot 3^3\cdot 5^2\cdot 7^1\cdot 11^1\cdot 13^1\cdot 17^1\cdot 19^1\cdot 23^1\cdot 29^1\cdot 31^1$&45\\
33&144403552893600&$2^5\cdot 3^3\cdot 5^2\cdot 7^1\cdot 11^1\cdot 13^1\cdot 17^1\cdot 19^1\cdot 23^1\cdot 29^1\cdot 31^1$&720\\
34&48134517631200&$2^5\cdot 3^2\cdot 5^2\cdot 7^1\cdot 11^1\cdot 13^1\cdot 17^1\cdot 19^1\cdot 23^1\cdot 29^1\cdot 31^1$&240\\
35&24067258815600&$2^4\cdot 3^2\cdot 5^2\cdot 7^1\cdot 11^1\cdot 13^1\cdot 17^1\cdot 19^1\cdot 23^1\cdot 29^1\cdot 31^1$&120\\
36&24067258815600&$2^4\cdot 3^2\cdot 5^2\cdot 7^1\cdot 11^1\cdot 13^1\cdot 17^1\cdot 19^1\cdot 23^1\cdot 29^1\cdot 31^1$&120\\
37&36100888223400&$2^3\cdot 3^3\cdot 5^2\cdot 7^1\cdot 11^1\cdot 13^1\cdot 17^1\cdot 19^1\cdot 23^1\cdot 29^1\cdot 31^1$&180\\
38&1335732864265800&$2^3\cdot 3^3\cdot 5^2\cdot 7^1\cdot 11^1\cdot 13^1\cdot 17^1\cdot 19^1\cdot 23^1\cdot 29^1\cdot 31^1\cdot 37^1$&180\\
39&333933216066450&$2^1\cdot 3^3\cdot 5^2\cdot 7^1\cdot 11^1\cdot 13^1\cdot 17^1\cdot 19^1\cdot 23^1\cdot 29^1\cdot 31^1\cdot 37^1$&45\\
40&333933216066450&$2^1\cdot 3^3\cdot 5^2\cdot 7^1\cdot 11^1\cdot 13^1\cdot 17^1\cdot 19^1\cdot 23^1\cdot 29^1\cdot 31^1\cdot 37^1$&45
\end{tabular}
\end{center}
\end{table}

In fact a closed formula for~$\failure(\sequencetwo_k)$
seems unlikely to be accessible.
However, based on numerical computations,
four conjectures
concerning the values of~$\failure(\sequencetwo_k)$
emerge.

\begin{conjecture}\label{conj1}
If~$p<k$ is a prime, then~$\vert\failure(\sequencetwo_k)\vert_p<1$.
\end{conjecture}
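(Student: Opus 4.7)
The plan is to exhibit, for each prime $p<k$, an index $n\geq1$ witnessing $\nu_p\bigl((\mu*\sequencetwo_k)(n)\bigr)<\nu_p(n)$; the existence of such an $n$ forces $p\mid\failure(\sequencetwo_k)$.

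In the range $\sqrt{k}\leq p<k$ (equivalently $p<k\leq p^2$) I would take $n=p$. The proof of Lemma~\ref{lemmaS2CongruencesAndPeriodicity} shows $\stirlingtwo{p+k-1}{k}\equiv\lceil k/p\rceil\pmod{p}$, and in this range $\lceil k/p\rceil\in\{2,\dots,p\}$, so $(\mu*\sequencetwo_k)(p)=\stirlingtwo{p+k-1}{k}-1\not\equiv 0\pmod{p}$, and $p$ appears in the denominator of $\tfrac1p(\mu*\sequencetwo_k)(p)$. By Bertrand's postulate this already covers every prime in $(k/2,k)$.

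For the harder range $p<\sqrt{k}$, the plan is to take $n=p^s$ with $s\geq2$ and exploit the identity
\[
(k-1)!(\mu*\sequencetwo_k)(p^s)=\sum_{j=1}^{k}(-1)^{k-j}\binom{k-1}{j-1}j^{p^{s-1}+k-2}\bigl(j^{p^{s-1}(p-1)}-1\bigr).
\]
Terms with $p\mid j$ vanish modulo $p^s$ since $\nu_p(j^{p^{s-1}+k-2})\geq p^{s-1}+k-2\geq s$. For $p\nmid j$ and $p$ odd, the lifting-the-exponent lemma gives $\nu_p(j^{p^{s-1}(p-1)}-1)=\nu_p(j^{p-1}-1)+(s-1)$, with equality exactly when the Fermat quotient $v_j:=(j^{p-1}-1)/p$ is a unit mod $p$. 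Collecting leading-order terms yields
\[
(k-1)!(\mu*\sequencetwo_k)(p^s)\equiv p^s X\pmod{p^{s+1}},\qquad X\equiv\sum_{\substack{2\leq j\leq k\\ p\,\nmid\, j}}(-1)^{k-j}\binom{k-1}{j-1}j^{k-1}v_j\pmod{p},
\]
an integer \emph{independent of $s$}. If one can show $\nu_p(X)<\nu_p((k-1)!)$, then $\nu_p((\mu*\sequencetwo_k)(p^s))<s$ for every $s\geq 1$ and the conjecture follows.

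The main obstacle is precisely proving the inequality $\nu_p(X)<\nu_p((k-1)!)$. Because $X$ is an alternating sum weighted by Fermat quotients, ruling out accidental $p$-adic cancellation seems to require combining Lucas-type congruences for $\binom{k-1}{j-1}\pmod{p}$ with distributional properties of the Fermat-quotient map $j\mapsto v_j$ on $(\Z/p\Z)^{\times}$; the numerical evidence is consistent with the inequality but no uniform proof is obvious. A natural fallback, should $X$ vanish too deeply $p$-adically, is to replace $n=p^s$ by $n=p^sq$ for an auxiliary prime $q\neq p$: the M\"obius expansion then contributes additional Fermat-quotient sums twisted by powers of $q$, giving considerable freedom to break any cancellation in $X$ by choosing $q$ appropriately. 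The prime $p=2$ requires the $p=2$ form of the lifting-the-exponent lemma (with its extra $\nu_2(j+1)$ correction), but the corresponding leading-order analysis goes through with small modifications, and in any case the numerical table suggests that $n\in\{2,4,8\}$ already suffices as a direct witness.
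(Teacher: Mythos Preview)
This statement is a \emph{conjecture} in the paper, not a theorem: the paper does not prove it in full. What the paper does prove is exactly the partial case in your first paragraph, namely~$p\in[\sqrt{k},k)$, and it does so by the same argument you give---take~$n=p$, invoke Lemma~\ref{lemmaS2CongruencesAndPeriodicity} to get~$\stirlingtwo{p+k-1}{k}\equiv\lceil k/p\rceil\pmod{p}$, and observe that~$\lceil k/p\rceil\in\{2,\dots,p\}$ so that~$\tfrac1p\bigl(\stirlingtwo{p+k-1}{k}-1\bigr)$ has~$p$ in its denominator. So on the range~$[\sqrt{k},k)$ your proposal and the paper coincide.

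For~$p<\sqrt{k}$ the paper offers nothing; this is precisely why the statement is labelled a conjecture. Your proposed attack via~$n=p^s$ and the lifting-the-exponent lemma is reasonable heuristics, and your identification of the obstacle---controlling the~$p$-adic valuation of the Fermat-quotient sum~$X$---is accurate. But as you yourself acknowledge, you have no argument ruling out deep~$p$-adic cancellation in~$X$, and the auxiliary-prime fallback~$n=p^sq$ is a hope rather than a method. So the proposal is not a proof of the conjecture; it is a correct reproduction of the paper's partial result together with a candid outline of why the remaining range is hard. That is a fair state of affairs, but you should not present it as a proof.
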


\begin{conjecture}\label{conj2}
If~$p\in [\sqrt{k},k)$ is a prime, then~$\vert\failure(\sequencetwo_k)\vert_p=\frac{1}{p}$.
\end{conjecture}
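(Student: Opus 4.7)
The plan is to prove the matching $p$-adic inequalities $\nu_p(\failure(\sequencetwo_k))\geq 1$ and $\nu_p(\failure(\sequencetwo_k))\leq 1$ separately. I would first handle the lower bound, which is within reach of the tools already developed in the paper. Evaluating the Dold quotient at $n=p$, the hypothesis $p\in[\sqrt{k},k)$ gives $p<k\leq p^2$, so $k\bmod p^2$ lies in $(p,p^2]$ and in particular avoids the set $\{1,2,\ldots,p\}$. Lemma~\ref{lemmaS2CongruencesAndPeriodicity} then yields $\stirlingtwo{p+k-1}{k}\not\equiv 1\pmod p$, and since $\stirlingtwo{k}{k}=1$, the quantity $(\mu\ast\sequencetwo_k)(p)=\stirlingtwo{p+k-1}{k}-1$ is not divisible by $p$. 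This forces $p$ to divide $\denom\bigl(\tfrac{1}{p}(\mu\ast\sequencetwo_k)(p)\bigr)$, establishing $\nu_p(\failure(\sequencetwo_k))\geq 1$.

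The upper bound $\nu_p(\failure(\sequencetwo_k))\leq 1$ is the hard direction, equivalent to showing that $p\cdot\sequencetwo_k$ already satisfies the Dold condition at the prime $p$, i.e.\ $\nu_p((\mu\ast\sequencetwo_k)(n))\geq\nu_p(n)-1$ for every $n\geq 1$. My plan would be to revisit the identity used in the proof of Theorem~\ref{theoremsecondkind},
\[
(k-1)!(\mu\ast\sequencetwo_k)(n)=\sum_{i=1}^{k}(-1)^{k-i}\binom{k-1}{i-1}i^{k-2}N_i(n),
\]
where $N_i(n)=\sum_{d\smalldivides n}\mu(n/d)i^{d}$ is the classical necklace sum and is divisible by $n$. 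This reproves $\failure(\sequencetwo_k)\smalldivides(k-1)!$ and already gives $\nu_p((\mu\ast\sequencetwo_k)(n))\geq\nu_p(n)-M$, where $M=\lfloor(k-1)/p\rfloor=\nu_p((k-1)!)$ under $p\geq\sqrt{k}$. The task is therefore to extract $M-1$ additional factors of $p$ from cancellation among the $k$ summands on the right. A natural strategy is to group the indices $i\in\{1,\ldots,k\}$ by their residue class modulo $p$---each of the $p$ classes containing at most $\lceil k/p\rceil\leq p$ indices, since $p\geq\sqrt{k}$---and to exploit the $p$-adic regularity $i_1\equiv i_2\pmod{p^s}\Rightarrow i_1^{k-2}N_{i_1}(n)\equiv i_2^{k-2}N_{i_2}(n)\pmod{p^s}$ within each class, combining this with the alternating weights $(-1)^{k-i}\binom{k-1}{i-1}$ via a Vandermonde- or finite-difference-style identity.

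The hardest step is calibrating this cancellation exactly. Term-by-term the factor $\binom{k-1}{i-1}/(k-1)!=1/((i-1)!(k-i)!)$ may carry a $p$-adic denominator as large as $p^M$, so the combinatorial saving from the $i$-sum must be precisely $p^{M-1}$: anything greater is already ruled out by the lower bound, and anything smaller leaves the valuation above $1$. Matching this saving to exactly this value, uniformly in $n$ and across the residue-class decomposition of $\{1,\ldots,k\}$ modulo $p$, is the principal obstacle. It is presumably the reason the statement is currently only conjectured, and overcoming it---perhaps by finding an alternative closed formula for $\sequencetwo_k$ whose denominator is intrinsically sharper than $(k-1)!$, or by realising $p\cdot\sequencetwo_k$ directly as a fixed-point count via a quotient of a known realisation of $(k-1)!\cdot\sequencetwo_k$ by a group of order $(k-1)!/p$---is where the real work of the proof lies.
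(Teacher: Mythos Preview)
This statement is a conjecture in the paper, and the paper does \emph{not} prove it in full. What the paper establishes is only the subcase $p\in\bigl(\tfrac{k-1}{2},k\bigr)$, via a two-line argument:
\[
\tfrac{1}{p}=\bigl|(k-1)!\bigr|_p\le\bigl|\failure(\sequencetwo_k)\bigr|_p<1,
\]
where the left inequality comes from $\failure(\sequencetwo_k)\divides(k-1)!$ (Theorem~\ref{theoremsecondkind}) together with the observation that $\nu_p((k-1)!)=1$ in this range, and the strict right inequality is exactly your lower-bound computation using Lemma~\ref{lemmaS2CongruencesAndPeriodicity} at $n=p$.

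Your lower bound $\nu_p(\failure(\sequencetwo_k))\ge1$ is correct and is precisely the paper's argument (stated there in the paragraph on Conjecture~\ref{conj1}). For the upper bound you correctly identify the difficulty and honestly flag that your plan is speculative; the paper does not go further either, so there is no ``paper's proof'' to compare your speculative part against. What you overlook is that your own setup already recovers the paper's partial result: you compute $M=\nu_p((k-1)!)=\lfloor(k-1)/p\rfloor$ and note the task is to descend from $M$ to $1$, but you do not remark that $M=1$ whenever $p>(k-1)/2$, which immediately disposes of that subrange without any of the cancellation machinery you sketch. The genuinely open part is $p\in[\sqrt{k},\tfrac{k-1}{2}]$, where $M\ge2$; your finite-difference/residue-class idea is a reasonable line of attack there, but as you yourself say, calibrating the saving to exactly $p^{M-1}$ is the obstacle, and the paper offers no help on it.
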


\begin{conjecture}\label{conj3}
The repair factor asymptotically involves non-trivial powers
of prime divisors of~$(k-1)!$, in the sense that
\[
\lim_{k\to\infty}\frac{\failure(\sequencetwo_k)}{\rad\bigl((k-1)!\bigr)}=\infty.
\]
\end{conjecture}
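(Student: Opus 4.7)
The approach is to bound~$\failure(\sequencetwo_k)$ from below by exhibiting, for each large~$k$, specific indices~$n$ at which the $p$-adic valuation of~$\frac{1}{n}(\mu*\sequencetwo_k)(n)$ is very negative. Starting from the formula~$(k-1)!\stirlingtwo{n+k-1}{k}=\sum_{j=1}^k(-1)^{k-j}\binom{k-1}{j-1}j^{n+k-2}$ used in the proof of Theorem~\ref{theoremsecondkind}, and writing~$N_n(j)=\frac{1}{n}\sum_{d\smalldivides n}\mu(n/d)j^d$ for the Witt (necklace) integer, Dirichlet convolution in~$n$ gives
\[
\frac{1}{n}(\mu*\sequencetwo_k)(n)
=
\frac{1}{(k-1)!}\sum_{j=1}^{k}(-1)^{k-j}\binom{k-1}{j-1}j^{k-2}N_n(j).
\]
Denoting the numerator sum on the right by~$S_{k,n,p}$, one has~$\nu_p(\failure(\sequencetwo_k))\ge\nu_p((k-1)!)+\nu_p(n)-\nu_p(S_{k,n,p})$, so the task becomes to find~$n$ making this lower bound grow with~$k$ for a fixed prime~$p$.

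My plan is to specialize to~$n=p^a$, so that~$nN_n(j)=j^{p^a}-j^{p^{a-1}}$, and to apply the lifting-the-exponent lemma to evaluate~$\nu_p(N_{p^a}(j))$ exactly in terms of the order of~$j$ modulo~$p$ and of~$\nu_p(j^{p-1}-1)$. I would then group the summands in~$S_{k,p^a,p}$ by the residue class of~$j$ modulo a suitable power of~$p$, controlling the binomial factor~$\binom{k-1}{j-1}$ via Kummer's theorem in terms of the base-$p$ expansions of~$k-1$ and~$j-1$, and simplifying~$j^{k-2}(j^{p^{a-1}(p-1)}-1)$ via Fermat--Euler. The goal is to show that within each residue class the surviving sum exhibits only a bounded amount of $p$-adic cancellation, so that~$\nu_p((k-1)!)+a-\nu_p(S_{k,p^a,p})$ tends to infinity with~$k$ (for~$a$ chosen appropriately in terms of~$k$). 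This would force~$\nu_p(\failure(\sequencetwo_k))\to\infty$, and since~$\rad((k-1)!)$ contributes only the first power of~$p$ for~$p\le k-1$, the $p$-adic contribution to the ratio~$\failure(\sequencetwo_k)/\rad((k-1)!)$ alone would already diverge.

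The hard part will be the cancellation analysis: the table shows that~$\nu_p(\failure(\sequencetwo_k))$ is a highly irregular, non-monotone function of~$k$ governed by subtle conditions on the base-$p$ expansion of~$k-1$, so no clean closed-form lower bound seems available. A realistic route is to establish divergence first along a subsequence of~$k$ (say those in fixed residue classes modulo ever-larger powers of~$p$), and then let the prime~$p$ itself vary with~$k$, much as in the proof of non-realizability of~$\sequencetwo_k$: Bertrand's postulate supplies plenty of primes with~$\sqrt{k}\le p<k$ over which the construction can be run. A subsidiary obstacle, which would dominate were Conjecture~\ref{conj1} to fail, is to rule out the possibility that primes~$q<k$ missed by the construction contribute factors~$1/q$ to the ratio; a weak form of Conjecture~\ref{conj1} asserting~$q\divides\failure(\sequencetwo_k)$ for every~$q<k$ would remove this obstacle, so the two conjectures are naturally attacked together.
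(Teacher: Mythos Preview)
This statement is listed in the paper as an open \emph{conjecture}; the paper gives no proof, only the numerical evidence in Table~\ref{table}. There is therefore no argument in the paper to compare your proposal against.

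Your proposal is itself not a proof but a strategy outline, and you are candid about its gaps. Two of those gaps are more serious than your phrasing suggests. First, the intermediate goal ``$\nu_p(\failure(\sequencetwo_k))\to\infty$ for fixed~$p$'' is almost certainly false: Conjecture~\ref{conj4}(c) predicts $\nu_p\bigl(\failure(\sequencetwo_{p^j})\bigr)=1$ for every $j>1$, so along $k=p^j$ the $p$-adic valuation stays equal to~$1$. You do retreat to letting~$p$ vary with~$k$, which is the right instinct, but then the argument is no longer a single $p$-adic estimate; it becomes a simultaneous bound over many primes, and the necklace/LTE computation you sketch gives information about one prime at a time. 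Second, even if some well-chosen $p=p(k)$ yields large $\nu_p$, the sentence ``the $p$-adic contribution to the ratio alone would already diverge'' does not imply $\failure(\sequencetwo_k)/\rad\bigl((k-1)!\bigr)\to\infty$: the ratio is a priori only rational, and each prime $q<k$ missing from $\failure(\sequencetwo_k)$ contributes a factor $1/q$. You correctly identify this as needing (a form of) Conjecture~\ref{conj1}, but that means your plan reduces Conjecture~\ref{conj3} to Conjecture~\ref{conj1} together with an ``unbounded excess'' statement, neither of which is settled.

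In short, the formula via necklace integers and the LTE idea are sound starting points, but at present the outline reorganises the difficulty rather than removing it; the paper regards the statement as open for the same reasons.
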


\begin{conjecture}\label{conj4}
For each prime~$p$ and positive integer~$j$, the following holds:
\begin{enumerate}
\item[a)] if~$j=1$,
    then~$\failure\bigl(\sequencetwo_{p+1}\bigr)=p\cdot\failure\bigl(\sequencetwo_p\bigr)$;
\item[b)] if~$j>1$, then~$\failure\bigl(\sequencetwo_{p^j+1}\bigr)\mid p^{j-1}\cdot\failure\bigl(\sequencetwo_{p^j}\bigr)$;
\item[c)] if~$j>1$, then~$\bigl\vert\failure\bigl(\sequencetwo_{p^j}\bigr)\bigr\vert_p=p^{-1}$;
\item[d)] we have $\bigl\vert\failure\bigl(\sequencetwo_{p^j+1}\bigr)\bigr\vert_p=p^{-j}$.
\end{enumerate}
\end{conjecture}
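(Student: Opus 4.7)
The plan is to attack all four parts of Conjecture~\ref{conj4} through a single $p$-adic analysis based on the closed formula used in the proof of Theorem~\ref{theoremsecondkind}. Applying Möbius inversion to
\[
(k-1)!\stirlingtwo{d+k-1}{k}=\sum_{j=1}^{k}(-1)^{k-j}\binom{k-1}{j-1}j^{d+k-2}
\]
summed over $d\smalldivides n$ yields
\[
(k-1)!(\mu*\sequencetwo_k)(n)=\sum_{j=1}^{k}(-1)^{k-j}\binom{k-1}{j-1}j^{k-2}N_j(n),
\]
where $N_j(n)=\sum_{d\smalldivides n}\mu(n/d)j^d$. Since $(j^n)$ is realisable (by the full shift on $j$ symbols), $n\divides N_j(n)$, so $M_j(n):=N_j(n)/n\in\Z$. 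Consequently $v_p\bigl(\failure(\sequencetwo_k)\bigr)$ equals
\[
\max_{n\ge1}\max\!\left\{0,\,v_p\bigl((k-1)!\bigr)-v_p\!\left(\sum_{j=1}^{k}(-1)^{k-j}\binom{k-1}{j-1}j^{k-2}M_j(n)\right)\right\}.
\]

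I would then attack each part by taking witnesses $n=p^a$ (non-$p$ factors of $n$ cannot increase the $p$-adic denominator) and computing the inner sum modulo a sufficient power of~$p$. The local analysis would combine Kummer's theorem for $v_p\bigl(\binom{k-1}{j-1}\bigr)$ with $k\in\{p^j,p^j+1\}$, and the lifting-the-exponent identity $v_p(j^{p^{a-1}(p-1)}-1)=v_p(j^{p-1}-1)+a-1$ for $p\notdivides j$, which controls $M_j(p^a)$. The expected mechanism is that the summand $j=k$ furnishes the dominant contribution to the denominator (via $\binom{k-1}{k-1}=1$ and a known residue of $M_k(p)$ modulo $p$), while lower-$j$ summands are ``swallowed'' by additional powers of $p$ coming from $j^{k-2}$ or from Wolstenholme-type cancellations. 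Parts~(a) and~(b) should follow by comparing the sums at $k$ and $k+1$ via Pascal's identity $\binom{k}{j-1}=\binom{k-1}{j-1}+\binom{k-1}{j-2}$, which tracks exactly how the $j$-weighting shifts upon incrementing~$k$ and introduces the extra factor of $p$ (respectively $p^{j-1}$) predicted.

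The hard part will be the exact-valuation statements~(c) and~(d): matching upper bounds force one to rule out accidental cancellations across the whole sum, so sharp congruences are needed rather than summand-by-summand estimates. I would handle this by grouping indices by the $p$-adic class of~$j$ and by $v_p(j^{p-1}-1)$, applying Wolstenholme's congruence (for $p\ge5$) to control the dominant band and treating $p=2,3$ by direct calculation. A secondary obstacle is that (c) and (d) implicitly claim the maximum denominator is already attained at small~$a$; a monotonicity bound of the form $v_p(M_j(p^a))\ge v_p(M_j(p))$ (plausible from the explicit expression $M_j(p^a)=\tfrac{1}{p^a}(j^{p^a}-j^{p^{a-1}})$) is required to reduce the supremum over~$n$ to a finite check.
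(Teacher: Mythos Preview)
The statement you are attempting to prove is presented in the paper as an open \emph{conjecture}, not as a theorem. The paper offers no proof of parts~(a), (b), (c), nor of~(d) for~$j>1$; these are supported only by the numerical data in Table~\ref{table}. The sole fragment the paper does establish is~(d) for~$j=1$, and the argument there is a two-line sandwich: for~$k=p+1$ one has~$\bigl\vert\failure(\sequencetwo_{p+1})\bigr\vert_p\ge\vert(k-1)!\vert_p=\vert p!\vert_p=p^{-1}$ from~$\failure(\sequencetwo_k)\divides(k-1)!$, and~$\bigl\vert\failure(\sequencetwo_{p+1})\bigr\vert_p<1$ because~$p\in[\sqrt{k},k)$ forces~$p<k\le p^2$, whence Lemma~\ref{lemmaS2CongruencesAndPeriodicity} shows the Dold congruence fails at~$n=p$. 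No finer $p$-adic analysis is carried out.

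Your framework is sound as far as it goes: the identity
\[
(k-1)!(\mu*\sequencetwo_k)(n)=\sum_{j=1}^{k}(-1)^{k-j}\binom{k-1}{j-1}j^{k-2}N_j(n)
\]
is correct and is exactly the decomposition underlying the paper's proof of almost realizability, so your starting point is the natural one. But what you have written is a programme, not a proof. Several of the steps you flag as ``expected'' or ``plausible'' are precisely the obstructions that leave the conjecture open: the reduction to prime-power witnesses~$n=p^a$ is not justified (the $p$-adic denominator depends on a signed sum over~$j$, and cancellation patterns can in principle differ at composite~$n$); the exact-valuation claims in~(c) and~(d) require ruling out cancellation across the entire sum, which you acknowledge but do not carry out; and the Pascal-identity comparison you sketch for~(a) and~(b) does not by itself control the other primes~$q\neq p$ needed for the divisibility~$\failure(\sequencetwo_{p^j+1})\divides p^{j-1}\failure(\sequencetwo_{p^j})$. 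In short, your outline is a reasonable line of attack on an open problem, but it is not a proof, and the paper does not contain one either.
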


The reverse of Conjecture~\ref{conj1} is clearly true. Indeed, if~$p$ is a prime divisor
of~$\failure(\sequencetwo_k)$, then~$p\divides (k-1)!$
and so~$p<k$.
It is easy to show that the statement of Conjecture~\ref{conj1} holds for a
prime~$p\in [\sqrt{k},k)$,\ because
in that case
\[
p<k\le p^2
\]
and so
\[
\left\vert\frac{1}{p}\bigl(S^{(2)}(p+k-1,k)-S^{(2)}(k,k)\bigr)\right\vert_p>1
\]
by Lemma~\ref{lemmaS2CongruencesAndPeriodicity}.
As a result,~$\bigl\vert\failure(\sequencetwo_k)\bigr\vert_p<1$.
We also point out that the statement
of Conjecture~\ref{conj2}
is satisfied for primes~$p\in\bigl(\frac{k-1}{2},k\bigr)$
as
\[
\frac{1}{p}=\vert (k-1)!\vert_p\leq\bigl\vert\failure(\sequencetwo_k)\bigr\vert_p<1.
\]
This also proves the last part of Conjecture~\ref{conj4} for~$j=1$.


\providecommand{\bysame}{\leavevmode\hbox to3em{\hrulefill}\thinspace}
\providecommand{\MR}{\relax\ifhmode\unskip\space\fi MR }
\providecommand{\MRhref}[2]{%
  \href{http://www.ams.org/mathscinet-getitem?mr=#1}{#2}
}
\providecommand{\href}[2]{#2}

\end{document}